\newtheorem{theorem}{Theorem}[section]
\newtheorem{problem}[theorem]{Problem}
\newtheorem{proposition}[theorem]{Proposition}
\newtheorem{lemma}[theorem]{Lemma}
\theoremstyle{definition}
\newtheorem{example}[theorem]{Example}
\theoremstyle{definition}
\theoremstyle{remark}
\newcommand{\R}{\mathbb R}
\newcommand{\Pro}{\mathbb P}
\newcommand{\EE}{\mathbb E}
\newcommand{\vol}{\mbox{vol}}
\newcommand{\M}{\mbox{M}}
\newcommand{\conv}{\mbox{conv}}
\newcommand{\spa}{\mbox{span}}
\newcommand{\bari}{\mbox{bar}}
\begin{document}
\title{The minimal volume of simplices containing a convex body}

\thanks{This work was partially supported by projects CONICET PIP 11220130100329, CONICET PIP 11220090100624, ANPCyT PICT 2015-2299, UBACyT 20020130300057BA. The second author was supported by a CONICET doctoral fellowship.}

\author{Daniel Galicer}
\author{Mariano Merzbacher}
\address{ Departamento de Matem\'{a}tica - IMAS-CONICET,
Facultad de Cs. Exactas y Naturales  Pab. I, Universidad de Buenos Aires
(1428) Buenos Aires, Argentina} \email{dgalicer@dm.uba.ar} \email{mmerzbacher@dm.uba.ar}

\author{Dami\'an Pinasco}
\address{Departamento de Matem\'{a}ticas y Estad\'{\i}sticas, Universidad T. Di Tella, Av. Figueroa Alcorta 7350 (1428), Buenos Aires, Argentina and CONICET}
\email{dpinasco@utdt.edu}

\keywords{Volume ratio, Simplices, Convex Bodies, Isotropic Position, Random Simplices}

\subjclass[2010]{52A23,52A38,52A40 (primary), 52A22 (secondary)}

\begin{abstract}
Let $K \subset \R^n$ be a convex body with barycenter at the origin. We show there is a simplex $S \subset K$ having also \emph{barycenter at the origin} such that
$\left(\frac{\vol(S)}{\vol(K)}\right)^{1/n} \geq \frac{c}{\sqrt{n}},$
where $c>0$ is an absolute constant.
This is achieved using stochastic geometric techniques. Precisely, if $K$ is in isotropic position, we present a method to find centered simplices verifying the above bound  that works with extremely high probability.


By duality, given a convex body $K \subset \R^n$ we show there is a simplex $S$ enclosing $K$ \emph{with the same barycenter} such that
\begin{equation*}
\left(\frac{\vol(S)}{\vol(K)}\right)^{1/n} \leq d \sqrt{n},
\end{equation*}
for some  absolute constant $d>0$. Up to the constant, the estimate cannot be lessened.

%

\end{abstract}

\maketitle

%

\section{Introduction}

Approximating a geometric body by a much simpler one results a very common technique in convex geometry and convex analysis, with many applications in discrete geometry and discrete/continuous optimization. For example, the use of the  John/L\"owner ellipsoid (maximum/minimum volume ellipsoid respectively), is one of the most standard tools in these areas \cite{matouvsek2002lectures,gruber2007convex,giannopoulos2001john,lassak1992banach, lassak1998approximation,pelczynski1983structural}. Polytopes, next to ellipsoids, are the most elementary convex sets, chief among them is the simplex. Extremal convex sets for volume ratios of the Euclidean ball  are exactly the simplices \cite{ball1991volume,barthe1998reverse}.

Throughout this article, simplices are  $n$-simplices in $\R^n$ exclusively, i.e. those polytopes formed by the convex hull of $(n+1)$ affinely independent points in $\R^n$ (the vertices).
A convex body in $\R^n$ is a compact convex
set with non-empty interior. For a bounded measurable subset $A \subset \R^n$, we denote by
$\vol(A)$ the volume (or Lebesgue measure) of $A$.

Given a convex body $K \subset \R^n$, we define $$
S(K):=\min{ \left(\frac{\vol(S)}{\vol(K)}\right)^{1/n}},$$
where the minimum is taken over all simplices $S$ in $\R^n$ containing $K$.
An old problem in convex geometry is the following:

\begin{problem}\label{problem0}
How large can $S(K)$ be?
\end{problem}

For the Euclidean plane, i.e. $n=2$, this problem was completely solved by Gross \cite{gross1918affine} (and generalized in different ways by W. Kuperberg \cite{kuperberg1983minimum}): every convex body $K \subset \R^2$ can be inscribed in a triangle of area at most  $2\vol(K)$. This ratio corresponds (exclusively) to the case that $K$ is a parallelogram. The measure of the tetrahedron (not necessarily regular) of least volume circumscribed around a convex body $K \subset \R^3$ is in general unknown. If $K \subset \R^3$ is a parallelepiped of volume one, then the minimal volume tetrahedron containing it has volume $\nicefrac{9}{2}$. It is an open question whether  this is the worst possible fit for the general case. To our knowledge, there are not even conjectured bounds for greater dimensions ($n \geq 4$).

Asymptotic results on this problem  were given in the seventies by Chakerian \cite[Corollary 5]{chakerian1973minimum}. The same estimate was recently rediscovered in 2014 by Kanazawa \cite[Theorem 1]{kanazawa2014minimal} using different arguments. In particular, both authors showed that
\begin{equation}
S(K) \leq n^{\frac{n-1}{n}} \approx  n.
\end{equation}
Note that when $n=2$ this is just Gross' bound.

It is possible to improve the previous bound applying a general inequality for volume ratios due to Giannopoulos and Hartzoulaki \cite{giannopoulos2002volume}. As a consequence of their results we have
\begin{equation}
S(K) \leq  c \sqrt{n} \log(n),
\end{equation}
where $c>0$ is an absolute constant.
Up to our knowledge, this is the best known bound so far; see also the bound given in the recent work of Paouris and Pivovarov \cite[Corollary 5.4]{paouris2017random} on this problem.

One might be interested in requiring additional properties to the simplex. For example, that it shares the same barycenter as the given convex body.
This induces a strong version of the aforementioned problem.
Given a convex body $K \subset \R^n$,  we define $$
S_{\circ}(K):=\min{ \left(\frac{\vol(S)}{\vol(K)}\right)^{1/n}},$$
where the minimum is taken over all simplices $S$ containing $K$ \emph{and having the same barycenter}.
Recall that the barycenter (or center of mass) of a convex body $K$ is given by
\begin{equation}
 \bari(K):= \frac{1}{\vol(K)} \int_K x dx.
\end{equation}

\begin{problem}\label{problem1}
How large can $S_{\circ}(K)$ be?
\end{problem}

Our main result is the following asymptotic estimate on this problem.

\begin{theorem}\label{main theorem}
Let $K \subset \R^n$ be a convex body. There is a simplex $S$ enclosing $K$ \emph{with the same barycenter} such that
\begin{equation} \label{boundsimplex}
\left(\frac{\vol(S)}{\vol(K)}\right)^{1/n} \leq d \sqrt{n},
\end{equation}
for some absolute constant $d>0$.

\end{theorem}


In fact for a centrally symmetric convex body $K$ we prove the following bound:
\begin{equation} \label{boundsimplexconcteisotropia}
S_{\circ}(K) \leq \frac{d \sqrt{n}}{L_{K^{\circ}}},
\end{equation}
for some absolute constant $d>0$. Here $L_{K^{\circ}}$ stands for the isotropic constant of the polar body $K^{\circ}$ (see definitions below). If $K$ is an arbitrary body (not necessarily centrally symmetric), we have 
\begin{equation} \label{boundsimplexconcteisotropianosim}
S_{\circ}(K) \leq \frac{d \sqrt{n}}{L_{D(K)^{\circ}}}, 
\end{equation}
where $D(K)$ stands for the difference body $K - K$.
Note that Equation~\eqref{boundsimplexconcteisotropia} is a direct consequence of  Equation~\eqref{boundsimplexconcteisotropianosim} (if $K$ is centrally symmetric then the difference body $D(K)$ is just  $2\cdot K$).

The estimate \eqref{boundsimplex} above, up to the absolute constant $d>0$, cannot be improved. Indeed, we show in Example~\ref{ejemplo bola} that if $K = B^n_2$, the Euclidean unit ball, then the regular simplex circumscribing it (see Figure~\ref{simplex y bola}) is a minimal volume simplex that contains $K$; and therefore
\begin{equation}
S(K)=S_{\circ}(K) \geq \widetilde{d}\sqrt{n},
\end{equation}
for some positive constant $\widetilde{d}>0$.

%

%


By duality,  Problem~\ref{problem1} is related with finding simplices of large volume \emph{inside} a convex body with the same barycenter.
The search of simplices of large volume contained in a convex body has an extensive and interesting history in geometry. For instance, the study of the maximum area of triangles in planar
convex bodies was undertaken by Blaschke \cite{blaschke1917affine} in the early 20th century.
Sas \cite{sas1939extremumeigenschaft} and Macbeath \cite{macbeath1951extremal} also considered the problem of approximating a given
convex body by inscribed polytopes.
Mckinney \cite{mckinney1974maximal}  studied certain properties of those simplices of maximum volume inside a  centrally symmetric convex body.
The survey \cite{hudelson1996largest} also deals with simplices of large volume in cubes.

Given a convex body $K \subset \R^n$ with barycenter at the origin, we focus on finding a simplex $ S \subset K$ of  large volume having also \emph{barycenter at the origin}.
Our contribution is the following.

\begin{theorem} \label{Theorem A}
Let  $K \subset \R^n$ be a convex body with barycenter at the origin. There is a simplex $S \subset K$ with barycenter at the origin such that
\begin{equation}
\left(\frac{\vol(S)}{\vol(K)}\right)^{1/n} \geq \frac{c}{\sqrt{n}},
\end{equation}
where $c>0$ is an absolute constant.
\end{theorem}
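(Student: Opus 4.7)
The plan is to reduce the problem to a centrally symmetric body and then exploit a convex-combination trick that recenters a random simplex without leaving the body. The first step is to set $K_0 := K \cap (-K)$. Since $K$ has barycenter at the origin, a classical inequality (Milman-Pajor) gives $\vol(K_0) \geq 2^{-n}\vol(K)$, and $K_0 \subset K$ is centrally symmetric. Any simplex in $K_0$ with barycenter at the origin is also a simplex in $K$ with barycenter at the origin, so the factor $2^{-n}$ is absorbed into the final constant. Since the volume ratio and the centering condition are affinely invariant, I would further assume that $K_0$ is in isotropic position.

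For the random construction, let $X_1,\dots,X_{n+1}$ be i.i.d.\ uniformly distributed on $K_0$, and set $b := \frac{1}{n+1}\sum_{i=1}^{n+1} X_i$. By convexity $b \in K_0$, and by central symmetry $-b \in K_0$. The key observation is that each
\begin{equation*}
Y_i := \tfrac{1}{2}X_i + \tfrac{1}{2}(-b) = \frac{X_i - b}{2}, \qquad i=1,\dots,n+1,
\end{equation*}
is a convex combination of two points of $K_0$, hence lies in $K_0$. The random simplex $S := \conv(Y_1,\dots,Y_{n+1})$ is therefore contained in $K_0 \subset K$; its barycenter equals $\frac{1}{n+1}\sum_i Y_i = \frac{1}{2}b - \frac{1}{2}b = 0$; and since $S = \frac{1}{2}(S_0 - b)$ with $S_0 := \conv(X_1,\dots,X_{n+1})$, we have $\vol(S) = 2^{-n}\vol(S_0)$.

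It then suffices to prove that $\vol(S_0) \geq (c_0/\sqrt{n})^n \vol(K_0)$ with positive (or high) probability. Writing $\vol(S_0) = \frac{1}{n!}|\det(X_2-X_1,\dots,X_{n+1}-X_1)|$, the task reduces to the lower bound $(\EE\,\vol(S_0))^{1/n} \geq c_1 L_{K_0}/\sqrt{n}$. This is the log-concave analogue of the Gaussian identity $\EE|\det(G_1,\dots,G_n)|^2 = n!$ for $G_i \sim N(0,I)$ which, combined with Stirling, gives the correct $1/\sqrt{n}$ scaling once we use the universal lower bound $L_{K_0} \geq c_2 > 0$ on the isotropic constant of a centrally symmetric convex body. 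The probabilistic method (or a Paley-Zygmund second-moment argument to get the ``very high probability'' version advertised in the abstract) then produces a realisation for which $(\vol(S)/\vol(K))^{1/n} \geq c/\sqrt{n}$.

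The main obstacle is the sharp expected-volume estimate with the correct $1/\sqrt{n}$ rate: general techniques (for instance those underlying the Paouris-Pivovarov bound mentioned above) tend to lose a $\log n$ factor, and pushing the expected determinant of i.i.d.\ random vectors with isotropic log-concave density up to the Gaussian rate—most naturally via a Blaschke-Petkantschin type integral-geometric identity or via small-ball estimates for determinants—appears to be the principal technical content of the proof.
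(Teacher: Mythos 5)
Your proposal is correct in outline and takes a genuinely different route from the paper. The paper works directly with a (not necessarily symmetric) isotropic $K$, considers $T = \conv(0, X_1,\dots,X_n)$, shows by a Bernstein/net argument (Proposition~\ref{Propo 1}) that the barycenter $u$ of $T$ is small and by a Pivovarov-type determinant estimate (Proposition~\ref{Propo volumen}) that $\vol(T)$ is large, then invokes the Kannan--Lov\'asz--Simonovits inradius bound $c\, L_K B_2^n \subset K$ to place the homothety center $w = -u/c_1$ inside $K$, and recenters by a homothety of fixed ratio $\lambda = 1/(1+c_1)$. You instead symmetrize by passing to $K_0 = K\cap(-K)$ (losing only $2^{-n}$ by Milman--Pajor), and then the midpoint map $X_i \mapsto (X_i - b)/2$ recenters $\conv(X_1,\dots,X_{n+1})$ while staying inside $K_0$ purely by convexity and central symmetry; no inradius estimate is needed. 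That is an elegant simplification. The one place where your write-up overestimates the difficulty: the lower bound $\left(\EE\,\vol(S_0)\right)^{1/n}\gtrsim \vol(K_0)^{1/n}/\sqrt{n}$ is \emph{not} the technical bottleneck and loses no $\log n$ factor --- it is exactly the classical Blaschke--Groemer/Busemann random simplex inequality (equivalently, Milman--Pajor for isotropic bodies), which the paper itself records as Equation~\eqref{cota esperanza}. For the existence statement of Theorem~\ref{Theorem A}, positive expected volume already furnishes a realisation, and no Paley--Zygmund step is needed. Where the two approaches genuinely diverge in strength is that the paper proves the \emph{high-probability} Theorem~\ref{Theorem B} (hence the probability bound $1-e^{-n}$ and the explicit measurable selector $f_n$), and there the small-ball determinant estimate and the KLS inradius do real work; your expectation-plus-symmetrization argument gives Theorem~\ref{Theorem A} more cheaply but would require an additional concentration argument (perhaps, as you note, Paley--Zygmund on the determinant, or Pivovarov's bound applied inside $K_0$) to recover the high-probability form.
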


Recall that the Mahler product of a given convex set $K \subset \R^n$ is defined as
\begin{equation}
M(K):= \vol(K) \vol(K^{\circ}),
\end{equation}
where $K^{\circ}$ stands for the polar set of $K$, i.e.
\begin{equation}
K^{\circ}= \{ x \in \R^n : \langle x, y \rangle \leq 1 \mbox{ for all } y\in K \}.
\end{equation}
One of the reasons we restrict ourselves  in searching for simplices  having \emph{barycenter at origin} is because we know the exact value of their Mahler product (see Lemma~\ref{Mahler}).

Our approach to obtain Theorem~\ref{main theorem} is based on a very simple idea. 
Loosely speaking, it is not difficult to see that the problem can be reduced to the case in which $ K $ is centrally symmetric (and therefore $K^\circ$ has barycenter at the origin). Note that, by Theorem~\ref{Theorem A}, there is a simplex $T\subset K^{\circ}$ of \emph{large} volume having also barycenter at the origin. By duality, we have that $K$ is enclosed by the simplex $T^{\circ}$ (with barycenter at the origin), which we show that has small volume. To do this, we make use of its Mahler product (since $T$ is centered) and the reverse Blaschke-Santal\'o inequality  (also known as the  Bourgain-Milman inequality, \cite[Theorem 8.2.2]{artstein2015asymptotic})  for the body $K$.

\bigskip

Stochastic geometry studies randomly generated geometric objects. We use techniques from this area to prove Theorem~\ref{Theorem A}. Indeed, this theorem is a consequence of a more general result of probabilistic nature (see Theorem~\ref{Theorem B} below).

Before we go into more detail we recall some basic definitions and set some notation. We denote the family of all simplices in $\R^n$ with barycenter at the origin  by $\mathcal{S}^n_0$. We write $S^{n-1}$ for the Euclidean sphere in $\R^n$ and denote by $\langle \cdot, \cdot \rangle$ the standard scalar product in $\R^n$.

A convex body is said to be in isotropic position (or simply, is isotropic) if it has volume one and satisfies the following two conditions:
\begin{itemize}
\item $\displaystyle{\int_Kx \, dx=0 \textrm{ (barycenter at 0)},}$
\item $\displaystyle{\int_K\langle x,\theta\rangle^2 \, dx=L_K^2\quad \forall \theta\in S^{n-1},}$
\end{itemize}
where $L_K$ is a constant independent of $\theta$, which is called the isotropic constant of $K$.

It is not hard to see that for every convex body $K$ in $\R^n$ with center of mass at the origin, there exists $A \in GL(n)$ such that $A(K)$ is isotropic \cite[Proposition 10.1.3]{artstein2015asymptotic}. Moreover, this isotropic image is unique up to orthogonal transformations; consequently, the isotropic constant $L_K$ results an invariant of the linear class of $K$. In some sense, the isotropic constant $L_K$ measures the spread of a convex
body $K$.

If the convex body $K$ is in isotropic position, the following theorem gives a probabilistic method to find simplices  inside $K$ (having barycenter at the origin)  with volume large enough. We believe this result is interesting in its own right.

\begin{theorem} \label{Theorem B}
There exists a function $f_n: \underbrace{\R^n \times \dots \times \R^n}_{n}  \to \mathcal{S}^n_0$ such that for every isotropic convex body $K \subset \R^n$ and $X_1, \dots, X_n$ independent random vectors uniformly distributed on $K$, then with probability greater than $1 - e^{-n}$ we have that $f_n(X_1,\dots,X_n)$ is a simplex with barycenter at the origin contained in $K$ such that
\begin{equation} \label{desigualdad adentro proba}
\vol(f_n(X_1,\dots,X_n)) \geq \frac{c^n L_K^{n} }{n^{\nicefrac{n}{2}}},
\end{equation}
where $c>0$ is an absolute constant.
\end{theorem}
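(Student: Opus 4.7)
The plan is to define $f_n$ as an affine rescaling of the natural random simplex $T_0 := \conv(0, X_1, \dots, X_n)$, tuned so that its barycenter sits at the origin while the simplex remains inside $K$. Let $b := \tfrac{1}{n+1}\sum_{i=1}^n X_i$ denote the barycenter of $T_0$ and fix a universal constant $\lambda \in (0,1)$ to be chosen later. Set
\[
f_n(X_1, \dots, X_n) := \lambda\, T_0 + (1-\lambda)\, w, \qquad w := -\tfrac{\lambda}{1-\lambda}\, b.
\]
By construction $f_n$ is a simplex of volume $\lambda^n \vol(T_0)$, its barycenter equals $\lambda b + (1-\lambda) w = 0$, and each of its vertices is the convex combination (with weights $\lambda$ and $1-\lambda$) of some $X_i \in K$ (or of $0 \in K$) with the auxiliary point $w$; therefore $f_n \subset K$ as soon as $w \in K$. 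The proof reduces to two high-probability estimates.

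\textbf{Volume of $T_0$.} I would first show $\vol(T_0) \geq c^n L_K^n/n^{n/2}$ with probability at least $1 - \tfrac{1}{2}e^{-n}$. The identity $\mathbb{E}|\det[X_1,\dots,X_n]|^2 = n!\, L_K^{2n}$ is a direct consequence of $\mathbb{E}[X_i X_i^T] = L_K^2 I$ via a Cauchy--Binet expansion, and Stirling $(n!)^{1/2}/n! \sim (e/n)^{n/2}$ yields the target rate in expectation. To upgrade to an $e^{-n}$-probability deviation bound, I would factor $|\det| = \prod_{k=1}^n d(X_k,H_{k-1})$ with $H_{k-1}:=\spa(X_1,\dots,X_{k-1})$; by isotropy, $\mathbb{E}[d(X_k,H_{k-1})^2 \mid H_{k-1}] = (n-k+1)L_K^2$, and a Paouris-type small-ball inequality for the projection of $X_k$ onto the $(n-k+1)$-dimensional subspace $H_{k-1}^\perp$ produces a lower bound for each factor with codimension-exponential failure probability.

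\textbf{Inclusion $w \in K$.} The random vector $b$ is log-concave and centered, with $\mathbb{E}|b|^2 = \tfrac{n}{(n+1)^2} L_K^2 \sim L_K^2$. Concentration of norms for log-concave vectors (Paouris--Klartag type) gives $|b| \leq C L_K$ with probability at least $1 - e^{-cn}$. Combined with a geometric inclusion of the form $K \supset c' L_K B_2^n$ valid for isotropic convex bodies (via $Z_2(K) = L_K B_2^n$ and standard inclusion properties relating $L^p$-centroid bodies to $K$), this yields $w \in K$ provided $\lambda$ is chosen as a sufficiently small absolute constant.

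The \textbf{main obstacle} lies in the small-ball estimate for $|\det[X_1,\dots,X_n]|$: a crude union bound over the $n$ conditional distance factors only delivers constant success probability, since the last factors (corresponding to low codimensions) have mild concentration. Overcoming this requires either a careful aggregation of the Paouris $(Ct)^d$-type decay in each codimension, or a more refined direct concentration bound for random determinants in the isotropic log-concave setting, so that the overall failure probability becomes $e^{-n}$ rather than merely $O(1)$. Granted both good events, $f_n(X_1,\dots,X_n)$ is a simplex in $K$ with barycenter at the origin and volume at least $\lambda^n \vol(T_0) \geq \tilde c^n L_K^n/n^{n/2}$, which is the content of the theorem.
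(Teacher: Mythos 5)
Your construction is exactly the one in the paper: take $T_0=\conv(0,X_1,\dots,X_n)$, and apply the homothety of ratio $\lambda\in(0,1)$ about the point $w=-\frac{\lambda}{1-\lambda}b$, with $w\in K$ guaranteed by $\Vert b\Vert\lesssim L_K$ (Bernstein plus a net in the paper; a Paouris-type norm concentration would also do) and by the inclusion $c\,L_K B_2^n\subset K$ (the paper cites Kannan--Lov\'asz--Simonovits). So the structure of your argument matches. However, the step you yourself flag as the ``main obstacle'' — upgrading the small-ball estimate for $|\det[X_1,\dots,X_n]|$ from constant to $e^{-n}$ failure probability — is genuinely left open in your proposal, and it is the one nontrivial ingredient. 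As you correctly note, a union bound over the $n$ conditional distances $d(X_k,H_{k-1})$ fails because the low-codimension factors only have a small-ball decay of order $(Ct)^{n-k+1}$, which for codimension one gives $O(1)$.

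The paper's resolution (following Pivovarov) avoids any union bound: it bounds a single negative moment of the whole product. Write $|\det A|=\prod_{k=1}^n |P_{V_{k-1}^\perp}X_k|$ with $V_{k-1}=\spa(X_1,\dots,X_{k-1})$ and set $Y_k=|P_{V_{k-1}^\perp}X_k|/(L_K\sqrt{n-k+1})$. By Gu\'edon's negative-exponent Kahane--Khintchine inequality together with isotropy, one has the \emph{uniform-in-codimension} bound $\EE\bigl[\,Y_k^{-1/2}\mid X_1,\dots,X_{k-1}\bigr]\le C'$ for an absolute constant $C'$. Conditioning iteratively (Fubini) gives $\EE\bigl[\prod_{k=1}^n Y_k^{-1/2}\bigr]\le (C')^n$, and then a single application of Markov's inequality yields
\begin{equation*}
\Pro\Bigl(\textstyle\prod_k Y_k < \alpha^n\Bigr)
=\Pro\Bigl(\textstyle\prod_k Y_k^{-1/2} > \alpha^{-n/2}\Bigr)
\le (C')^n\alpha^{n/2},
\end{equation*}
which is at most $\tfrac12 e^{-n}$ for $\alpha$ a small enough absolute constant. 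This is exactly the ``more refined direct concentration bound for random determinants'' you were hoping for, and it is what closes the gap: the crucial point is that the $-\tfrac12$ negative moment of the normalized projection length is bounded by an absolute constant regardless of the codimension, so the product's negative moment is exponentially bounded without any loss in the low-codimension steps. To make your proposal complete you would need to supply this (or an equivalent) argument; as written, the volume estimate is asserted but not established.
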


Note that the volume ratio is invariant under linear transformations  i.e.,
\begin{equation}\label{invariantetransf}
\left(\frac{\vol(S)}{\vol(K)}\right)^{1/n} = \left(\frac{\vol(A (S) )}{\vol(A(K))}\right)^{1/n}
\end{equation}
for every $A \in GL(n)$. Thus, we have the following result:

{\sl For every convex body $K \subset \R^n$ with barycenter at the origin there is a simplex $S \subset K$ having also barycenter at the origin such that
\begin{equation} \label{desigualdad adentro}
\left(\frac{\vol(S)}{\vol(K)}\right)^{1/n} \geq \frac{c L_K}{\sqrt{n}},
\end{equation}
where $c>0$ is an absolute constant.
}

Observe that, since the isotropic constant $L_K$ of any convex body is bounded from below by and absolute constant \cite[Proposition 10.1.8.]{artstein2015asymptotic}, then  Theorem~\ref{Theorem A} follows from the previous inequality. We emphasize that it is unknown whether the isotropic constant is bounded from above by an absolute constant. The best known general upper bound  $L_K\leq cn^{\frac{1}{4}}$, which was given by  Klartag \cite{klartag2006convex} and improves the earlier estimate $L_K\leq cn^{\frac{1}{4}}\log n$ due to Bourgain \cite{bourgain1991distribution}.

\bigskip

Estimates \eqref{desigualdad adentro proba} and \eqref{desigualdad adentro} should also be contrasted  with a classic result of Macbeath \cite{macbeath1951extremal} (see also \cite[Theorem 2.10.]{pach2011combinatorial}), which asserts that any convex body $K \subset \R^n$ contains a convex polytope of $d$ vertices, whose volume is at least as large as the maximal volume of a polytope of $d$ vertices inscribed in a Euclidean ball (of the same volume as $K$). In particular, if $d=n+1$ it is not difficult to see that we can find a simplex $S \subset K$ such that  
\begin{equation}\label{resultado de Macbeath}
\left(\frac{\vol(S)}{\vol(K)}\right)^{1/n} \geq \frac{1}{\sqrt{n}},
\end{equation}
where $c>0$ is an absolute constant.
The same can be deduced using the well-known Dvoretzky theorem. The inequalities given in \eqref{desigualdad adentro proba}, \eqref{desigualdad adentro} and \eqref{resultado de Macbeath} resemble, at first glance, the asymptotic growth given by Milman and Pajor in \cite[Proposition 5.6.]{milman1989isotropic} (connected with the $n$-dimensional generalization of the classical Sylvester problem, see \cite{brazitikos2014geometry} and the references therein).
On the other hand, Equations \eqref{desigualdad adentro proba} and \eqref{desigualdad adentro} can also be linked with the bounds given when applying the Blaschke-Groemer inequality \cite[Theorem 8.6.3]{schneider2008stochastic} (or the Busemann random simplex inequality, see \cite[Theorem 9.2.6]{gardner1995geometric} or \cite[Theorem 8.6.1]{schneider2008stochastic}), which state that the expected volume of a randomly generated simplex inside a given convex body $K$ of fixed volume is minimized when $K$ is an ellipsoid.
Anyway, either by the results of Milman and Pajor \cite[Proposition 5.6 ]{milman1989isotropic} or by the Blaschke-Groemer inequality \cite[Theorem 8.6.3]{schneider2008stochastic}, for every convex body $K \subset \R^n$ one gets
\begin{equation} \label{cota esperanza}
\mathbb{E}_{X_i \in K} [\vol\left(\conv(X_1, \dots, X_{n+1})\right)] \geq \frac{c^n \vol(K)^n}{n^{\nicefrac{n}{2}}},
\end{equation}
where $c>0$ is an absolute constant.

Our contribution, Theorem \ref{Theorem B}, consists in giving \emph{with extremely high probability}, simplices with the \textbf{same barycenter} (a key property for our purposes) whose volumes satisfy the same lower bound: of order $\frac{\vol(K)^n}{n^{\nicefrac{n}{2}}}$. The main idea to get this is to show that we can find with extremely high probability randomly generated simplices whose  \emph{barycenters are close to the origin} (Proposition \ref{Propo 1}) and \emph{with large volume} (see Proposition \ref{Propo volumen}; this should also be compared with Equation \eqref{cota esperanza} above). Then we make a suitable rescale to make the centroids match, with the care to keep staying within the original body. All this is inspired, in a sense, on some arguments presented on the recent paper of Nasz{\'o}di \cite{naszodi2016proof}, which solves a conjecture of {B}{\'a}r{\'a}ny, {K}atchalski and {P}ach regarding quantitative Helly type results (see also the proof of \cite[Theorem 3.1.]{brazitikos2017brascamp}).

\bigskip
The article is organized as follows. In Section~\ref{Section 2} we give a proof of  Theorem~\ref{Theorem B}. Then, in Section~\ref{Section 3} we prove Theorem~\ref{main theorem}, and show that the corresponding asymptotic  estimate is sharp.
We refer the reader to the books \cite{artstein2015asymptotic} and \cite{brazitikos2014geometry} for the general  theory of asymptotic geometric analysis and the theory of isotropic convex bodies.
In Section~\ref{graciasref} we have included  an alternative proof of  Theorem~\ref{main theorem} based on some enlightening comments given by the anonymous referee.

\section{A probabilistic approach}\label{Section 2}

The probabilistic method is a standard method for proving the existence of a specified kind of mathematical object. The philosophy is to show that if one randomly chooses objects from a specified class, the probability that the result is of the prescribed type is positive.
In this section we use this method to give a proof Theorem~\ref{Theorem B}.
For this we need two propositions that essentially state that, with very high probability, certain random simplices have ``good properties''.

Before we state them, we recall some basic properties on simplices and convex bodies.
We denote by $S(v_0, \dots, v_n)$ the convex hull of the points $v_0, \dots, v_n \in \R^n$ or, in other words, the simplex with vertices $v_0, \dots, v_n$.
It is easy to see that the barycenter/centroid of a simplex $S(v_0, \dots, v_n)$ is given by the mean of the vertices,
\begin{equation}
\bari(S(v_0, \dots, v_n))= \frac{1}{(n+1)} \sum_{i=0}^n v_i.
\end{equation}

%
%
%

Suppose $K \subset \R^n$ is an isotropic convex body and we randomly choose $X_1, \dots, X_n$ in $K$. The following statement asserts that typically  the barycenter of the random simplex $S(0, X_1, \dots, X_n)$ has ``small'' norm.

\begin{proposition} \label{Propo 1}
There is an absolute constant $c_1>0$ such that for every isotropic convex body $K\subset \R^n$ and  $\{X_i\}_{i=1}^n$ independent random vectors uniformly distributed in $K$ then
\begin{equation}
\Pro\left\{  \left\Vert \bari(T) \right\Vert \leq c_1 L_K \right\} > 1 - \frac{1}{2} e^{-n},
\end{equation}
where  $T$ is the random simplex $S(0, X_1, \dots, X_n)$.
\end{proposition}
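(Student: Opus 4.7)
The plan is to bound $\|Y\|$, where $Y := \bari(T) = \frac{1}{n+1}\sum_{i=1}^n X_i$, by combining a concentration estimate in each fixed direction with an $\varepsilon$-net argument over the sphere. Let me walk through the key steps.

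First, I would fix $\theta \in S^{n-1}$ and study the real random variable $\langle X_i,\theta\rangle$. By the isotropy of $K$, these are i.i.d.~with mean zero and variance $L_K^2$. Applying Lemma~\ref{Khintchine semi-norms} to the seminorm $x \mapsto |\langle x,\theta\rangle|$ on $K$ yields, for every $p \geq 1$,
\begin{equation*}
\bigl(\EE|\langle X_i,\theta\rangle|^p\bigr)^{1/p} \leq C p \int_K|\langle x,\theta\rangle|\,dx \leq C p \Bigl(\int_K\langle x,\theta\rangle^2 dx\Bigr)^{1/2} = C p L_K.
\end{equation*}
In other words $\langle X_i,\theta\rangle$ is sub-exponential with $\psi_1$-norm $\lesssim L_K$.

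Next, since the $\langle X_i,\theta\rangle$ are i.i.d., centered, sub-exponential with $\psi_1$-norm $\leq CL_K$, and have variance $L_K^2$, a standard Bernstein-type inequality gives
\begin{equation*}
\Pro\Bigl(\Bigl|\sum_{i=1}^n\langle X_i,\theta\rangle\Bigr| > s\Bigr) \leq 2\exp\Bigl(-c\min\Bigl(\tfrac{s^2}{n L_K^2},\tfrac{s}{L_K}\Bigr)\Bigr).
\end{equation*}
Taking $s = c_1(n+1) L_K$ with $c_1 \in (0,1]$ makes both arguments of the minimum of order $c_1^2 n$, so that
\begin{equation*}
\Pro\bigl(|\langle Y,\theta\rangle|>c_1 L_K\bigr) \leq 2\exp(-c' c_1^2 n)
\end{equation*}
for an absolute constant $c' > 0$.

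To pass from a fixed direction to the supremum, I would pick a $1/2$-net $N\subset S^{n-1}$ with $|N|\leq 5^n$ and use the standard estimate $\|Y\|\leq 2\max_{\theta\in N}|\langle Y,\theta\rangle|$. A union bound then yields
\begin{equation*}
\Pro(\|Y\|>2c_1 L_K) \leq 5^n \cdot 2\exp(-c'c_1^2 n) = 2\exp\bigl(n(\log 5 - c'c_1^2)\bigr).
\end{equation*}
Choosing $c_1$ to be a sufficiently large absolute constant so that $c' c_1^2 > \log 5 + 1 + (\log 4)/n$ (which holds uniformly in $n$ for $c_1$ large) makes the right-hand side at most $\tfrac12 e^{-n}$, giving the result with the final constant $2c_1$.

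The main obstacle I anticipate is ensuring that the single-direction deviation bound is exponentially small in $n$ (rather than merely in $\sqrt{n}$, as a naïve application of Borell's lemma to the log-concave sum $\sum\langle X_i,\theta\rangle$ would give). Exploiting independence through Bernstein—which uses both the variance and the $\psi_1$-norm—is essential, since here the target scale $c_1 L_K$ for $|\langle Y,\theta\rangle|$ lies well below the scale $\sqrt{n}L_K/(n+1)$ of its standard deviation multiplied by $\sqrt{n}$, and only a two-regime tail bound delivers the $e^{-\Theta(n)}$ decay needed to beat the $5^n$ entropy of the net.
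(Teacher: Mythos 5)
Your proposal follows essentially the same route as the paper: establish a $\psi_1$-bound on the marginals $\langle X_i,\theta\rangle$ (the paper invokes it as a known fact, you re-derive it from the Kahane--Khinchine lemma and isotropy), apply Bernstein's inequality for sub-exponential sums to get a pointwise tail of the correct $e^{-\Theta(n)}$ order, and then beat the $5^n$ entropy of a $1/2$-net with a union bound plus the standard successive-approximation lift from the net to the sphere. One small slip in exposition: you first restrict to $c_1\in(0,1]$ so that the Bernstein minimum is the sub-Gaussian term $\sim c_1^2 n$, but then later require $c_1$ to be a \emph{large} absolute constant to dominate $\log 5 + 1$; for $c_1>1$ the minimum switches to the sub-exponential term and the exponent is $\sim c_1 n$ rather than $c_1^2 n$, but this is still linear in $n$ with a tunable prefactor, so the argument goes through unchanged -- exactly as in the paper, where for $t>CL_K$ only the linear regime of Bernstein is used.
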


Our arguments to prove this proposition are based on the proofs of \cite[Theorem 3.1.]{alonso2008isotropy} and \cite[Theorem 1.1.]{klartag2009hyperplane}.
We need to state two lemmas. For elementary background on Orlicz spaces we refer the reader to \cite[Section 3.6.2.]{artstein2015asymptotic}.


The first fact we need, Lemma~\ref{Fact psi1} below, asserts a ``good behavior'' of the marginals $\langle \cdot, \theta \rangle$, for any direction $\theta \in S^{n-1}$.

\begin{lemma} \label{Fact psi1}
There is an absolute constant $C>0$ such that for every isotropic convex body $K \subset \R^n$ and every $\theta \in S^{n-1}$ we have
\begin{equation}
\Vert \langle \cdot, \theta \rangle \Vert_{L_{\psi_1}} \leq C L_K.
\end{equation}
\end{lemma}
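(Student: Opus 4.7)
The plan is to deduce the $\psi_1$ bound from an $L_p$ growth estimate of the linear functional $\langle \cdot,\theta\rangle$ on $K$. I would first note that since $K$ is isotropic, the normalized volume measure on $K$ is a probability measure and the second moment identity gives
\[
\int_K |\langle x,\theta\rangle|\,dx \leq \left(\int_K \langle x,\theta\rangle^2\,dx\right)^{1/2} = L_K,
\]
by Jensen's inequality.

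Next I would upgrade this $L_1$ bound to an $L_p$ bound with the correct linear growth in $p$. Applying the left-hand inequality in Lemma~\ref{Khintchine semi-norms} to the seminorm $|\langle \cdot ,\theta\rangle|$ we obtain, for every $p \geq 1$,
\[
\left(\int_K |\langle x,\theta\rangle|^p\,dx\right)^{1/p} \leq C p \int_K |\langle x,\theta\rangle|\,dx \leq C\,p\,L_K,
\]
where $C$ is an absolute constant. Thus the $L_p$ norms grow at most linearly in $p$, with constant proportional to $L_K$.

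Finally, I would invoke the standard equivalence between the $\psi_1$ Orlicz norm and moment growth, namely
\[
\|f\|_{L_{\psi_1}} \;\asymp\; \sup_{p\geq 1} \frac{\|f\|_{L_p}}{p},
\]
which follows at once from expanding $e^{|f|/t}$ in its Taylor series (see \cite[Section~3.5.2--3.6.2]{artstein2015asymptotic}). Combining this with the moment estimate above yields $\|\langle \cdot,\theta\rangle\|_{L_{\psi_1}} \leq C' L_K$, as desired.

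The only mildly delicate step is the Kahane--Khinchine type bound, but we are granted it by Lemma~\ref{Khintchine semi-norms}, so the proof reduces essentially to a one-line argument combining isotropy, moment comparison, and the integral characterization of the Orlicz norm.
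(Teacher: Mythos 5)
Your argument is correct and is precisely the one the paper intends: the paper cites the moment characterization of the $\psi_1$ norm from \cite[Lemma 3.5.5]{artstein2015asymptotic} together with Lemma~\ref{Khintchine semi-norms}, and your proof fills in exactly those two ingredients (linear $L_p$ growth from the Kahane--Khinchine lemma applied to the seminorm $|\langle\cdot,\theta\rangle|$, then the $\sup_p \|f\|_p/p$ characterization of $\|\cdot\|_{\psi_1}$), with the initial Cauchy--Schwarz step $\|\langle\cdot,\theta\rangle\|_{L_1}\le \|\langle\cdot,\theta\rangle\|_{L_2}=L_K$ identical in spirit. No gaps.
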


The previous statement is known in the area and is a direct consequence of  \cite[Lemma 3.5.5.]{artstein2015asymptotic} and \cite[Theorem 3.5.11]{artstein2015asymptotic}.

We also need a classical inequality due to Bernstein about sums of independent random variables (see, for example \cite[Theorem 3.5.16]{artstein2015asymptotic}).

\begin{theorem}[Bernstein inequality] \label{Bernstein}
Let $\{Y_i\}_{i=1}^n$ be a sequence of random variables with mean $0$ on some probability space. Assume that $Y_i$ belong to $L_{\psi_1}$ and that $\Vert Y_i \Vert_{L_{\psi_1}} \leq M$ for all $i = 1, \dots, n$. Let $\sigma^2 = \frac{1}{n} \sum_{i=1}^n \Vert Y_i \Vert_{L_{\psi_1}}^2$.
Then, for all $t >0$,
\begin{equation}
 \Pro \left\{ \left| \sum_{i=1}^n Y_i \right| > t n \right\} \leq  e^{ -D \; n \min\{\frac{t^2}{\sigma^2},\frac{t}{M}\}},
\end{equation}
for some absolute constant $D>0$.
\end{theorem}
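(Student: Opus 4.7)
The plan is to combine a moment generating function (MGF) estimate derived from the $\psi_1$ hypothesis with the classical Cram\'er--Chernoff exponential tilting argument. First I would extract moment bounds from the definition of the Orlicz norm: since $\|Y_i\|_{L_{\psi_1}} \le M$, we have $\EE[\exp(|Y_i|/M)] \le 2$, and expanding the exponential yields $\EE[|Y_i|^k] \le k!\,\psi_i^k$ for every $k \ge 1$, where $\psi_i := \|Y_i\|_{L_{\psi_1}}$. (Working with the individual $\psi_i$ rather than the uniform bound $M$ is crucial, because the final exponent should feature $\sigma^2 = \tfrac1n \sum_i \psi_i^2$, not $M^2$.)

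Next I would establish the MGF bound. Using $\EE[Y_i]=0$ to kill the linear term and the moment estimate above,
\[
\EE[e^{\lambda Y_i}] \;=\; 1 + \sum_{k \ge 2} \frac{\lambda^k \EE[Y_i^k]}{k!} \;\le\; 1 + \sum_{k \ge 2} (|\lambda|\psi_i)^k \;\le\; 1 + 2\lambda^2 \psi_i^2 \;\le\; \exp\!\bigl(2\lambda^2 \psi_i^2\bigr),
\]
valid whenever $|\lambda| \le 1/(2M)$, since this forces $|\lambda|\psi_i \le 1/2$ and hence the geometric tail is at most $2(\lambda\psi_i)^2$. By independence,
\[
\EE\Bigl[\exp\!\Bigl(\lambda \sum_{i=1}^n Y_i\Bigr)\Bigr] \;\le\; \exp\bigl(2\lambda^2 n\sigma^2\bigr).
\]

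Then Markov's inequality gives the Chernoff bound
\[
\Pro\Bigl\{\textstyle\sum_i Y_i > tn\Bigr\} \;\le\; \inf_{0 < \lambda \le 1/(2M)} \exp\bigl(-\lambda t n + 2\lambda^2 n \sigma^2\bigr),
\]
and the optimization splits into two regimes. If $t \le 2\sigma^2/M$, the unconstrained minimizer $\lambda^\ast = t/(4\sigma^2)$ lies in the admissible range and produces the sub-Gaussian tail $\exp(-t^2 n/(8\sigma^2))$. Otherwise I would take $\lambda = 1/(2M)$ at the boundary; the quadratic term is then absorbed into the linear one using $\sigma^2/M < t/2$, yielding the sub-exponential tail $\exp(-tn/(4M))$. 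Repeating the whole argument for $-\sum_i Y_i$ (whose summands satisfy the same hypotheses) and combining gives the two-sided bound with an absolute constant $D > 0$.

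The main technical point is the MGF estimate in step two; everything else is bookkeeping. The only subtlety is the interplay between the two norms appearing in the statement: the uniform bound $M$ governs only the range $|\lambda| \le 1/(2M)$ of admissible tilts, while the average $\sigma^2$ controls the Gaussian regime. This dichotomy is precisely what produces the $\min\{t^2/\sigma^2,\, t/M\}$ inside the exponent.
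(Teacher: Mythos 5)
This statement is not proved in the paper at all: it is quoted as a classical result with a pointer to \cite[Theorem 3.5.16]{artstein2015asymptotic}, so there is no internal proof to compare against. Your argument is precisely the standard proof behind that citation (moment bounds from the $\psi_1$ norm, the MGF estimate $\EE[e^{\lambda Y_i}]\leq \exp(2\lambda^2\psi_i^2)$ for $|\lambda|\leq 1/(2M)$, Chernoff tilting, and the two-regime optimization), and the computations are correct; one small slip is that the displayed moment bound $\EE[|Y_i|^k]\leq k!\,\psi_i^k$ comes from expanding $\EE[\exp(|Y_i|/\psi_i)]\leq 2$, not $\EE[\exp(|Y_i|/M)]\leq 2$ as written, though your parenthetical shows you intend the former.

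The only genuine caveat is the last step, ``repeating for $-\sum_i Y_i$ and combining'': the union bound produces a prefactor $2$, i.e.\ the bound $2\exp\bigl(-D\,n\min\{t^2/\sigma^2,\,t/M\}\bigr)$, and this factor cannot be absorbed into the constant $D$ uniformly in $t>0$ (for a single Rademacher variable and small $t$ the left-hand side equals $1$ while any bound of the form $e^{-Dn\min\{\cdot\}}$ is strictly below $1$, so the statement as transcribed in the paper, with the prefactor dropped, is literally false for small deviations). This is a defect of the paper's transcription rather than of your proof: the cited reference keeps the factor $2$, and in the paper's application one has $t>CL_K$ with $M,\sigma\leq CL_K$, so $n\min\{t^2/\sigma^2,t/M\}\geq n$ and the prefactor is absorbed by halving $D$. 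If you either retain the factor $2$ in the conclusion, or restrict to the regime $n\min\{t^2/\sigma^2,t/M\}\gtrsim 1$ before absorbing it, your proof is complete.
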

We are now ready to give a proof of Proposition~\ref{Propo 1}.

\begin{proof}[Proof of Proposition~\ref{Propo 1}]
Let $\{X_i\}_{i=1}^n$ be independent random vectors uniformly distributed on $K$ and let $\theta$ be fixed direction in $S^{n-1}$.

By combining Lemma~\ref{Fact psi1} and Theorem~\ref{Bernstein} for the random variables $Y_j:=\langle X_j, \theta \rangle$ we have, for all $t > CL_K$,
$$ \Pro \left\{ \left\vert \langle \sum_{i=1}^n X_i, \theta \rangle \right\vert > t n \right\} \leq e^{-n  \frac{ \; t \; D}{C \; L_K}}.$$

Let $\mathcal N$ be a $\frac{1}{2}$-net on the sphere of cardinality less than or equal to $5^n$ (see e.g., \cite[Lemma 5.2.5.]{artstein2015asymptotic}). Then
$$ \Pro \left\{ \left\vert \langle \sum_{i=1}^n X_i, \theta \rangle \right\vert > t n \;\; \mbox{for some $\theta \in \mathcal N$}\right\} \leq e^{-n(\frac{t \; D}{C \; L_K} - \log(5))},$$ and hence
$$ \Pro \left\{ \left\vert \langle \sum_{i=1}^n X_i, \theta \rangle \right\vert \leq t n \;\; \mbox{for every $\theta \in \mathcal N$}\right\} \geq 1- e^{-n(\frac{t \; D}{C \; L_K} - \log(5))}.$$

Every vector $\vartheta \in S^{n-1}$ can be written in the form $\vartheta = \sum_{j=1} \delta_j \theta_j$, with $\theta_j \in \mathcal N$ and $0 \leq \delta_j \leq 2^{1-j}$ (see for example the proof of \cite[Proposition 5.2.8.]{artstein2015asymptotic}).

Observe that
$$ \bigcap_{\theta \in \mathcal{N}} \left\{\left\vert \langle \sum_{i=1}^n X_i, \theta \rangle \right\vert \leq t n \right\}  \subset \left\{ \left\Vert \sum_{i=1}^n X_i \right\Vert \leq 2 t n \right\}  =\left\{\max_{\vartheta \in S^{n-1}} \left\vert \langle \sum_{i=1}^n X_i, \vartheta \rangle \right\vert \leq 2 t n \right\}.$$
Indeed, let $\vartheta$ be an arbitrary unit vector and suppose that $\vert \langle \sum_{i=1}^n X_i, \theta \rangle \vert \leq t n$ for every $\theta \in \mathcal N$, then
$$ \left\vert \langle \sum_{i=1}^n X_i,  \vartheta \rangle \right\vert = \left\vert \langle \sum_{i=1}^n v_i,  \sum_{j=1}^\infty \delta_j \theta_j \rangle \right\vert \leq \sum_{j=1}^\infty \delta_j \left\vert \langle \sum_{i=1}^n X_i, \theta_j \rangle \right\vert \leq 2 t n.$$
Thus, for every $t > C L_K$ we  have
$$\Pro \left\{ \left\Vert \sum_{i=1}^n X_i \right\Vert \leq 2 t n \right\} \geq 1 - e^{-n(\frac{t \; D}{C \; L_K} - \log(5))}.$$
The result now follows by setting $t := \frac{c_1 (n+1) L_K}{2n}$, for $c_1>0$ sufficiently large.
\end{proof}

The second proposition we need asserts that the simplex $S(0, X_1, \dots, X_n)$ typically  has   ``large volume''.

\begin{proposition} \label{Propo volumen}
There is an absolute constant $c_2>0$ such that for every isotropic convex body $K\subset \R^n$ and  $\{X_i\}_{i=1}^n$ independent random vectors uniformly distributed in $K$ then
\begin{equation}
\Pro\left\{  \vol\left(S(0, X_1 \dots, X_n) \right) \geq \frac{c_2^n L_K^n}{n^{\frac{n}{2}}} \right\} > 1 - \frac{1}{2} e^{-n}.
\end{equation}

\end{proposition}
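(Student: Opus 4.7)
The plan is to express the simplex volume via Gram--Schmidt and then bound a negative moment of it using Lemma~\ref{Khintchine semi-norms}. Writing $V:=\vol(S(0,X_1,\dots,X_n))$, the identity
\[
V = \frac{1}{n!}\bigl|\det[X_1\mid\cdots\mid X_n]\bigr| = \frac{1}{n!}\prod_{k=1}^{n} h_k,
\]
with $h_k := \|\pi_{F_k}(X_k)\|$ the distance from $X_k$ to $\spa(X_1,\dots,X_{k-1})$ and $F_k$ the orthogonal complement of that span (of dimension $d_k := n-k+1$), turns $\{V<t\}$ into $\{V^{-1/2}>t^{-1/2}\}$; by Markov's inequality the task reduces to bounding
\[
\mathbb{E}[V^{-1/2}] = (n!)^{1/2}\,\mathbb{E}\Bigl[\prod_{k=1}^{n} h_k^{-1/2}\Bigr].
\]

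To estimate this product I would condition iteratively. Given $X_1,\dots,X_{k-1}$, the vector $X_k$ is uniform on the isotropic body $K$ and $x\mapsto\|\pi_{F_k}(x)\|$ is a semi-norm on $\R^n$. Isotropy of $K$ (together with $\vol(K)=1$) yields $\int_K\|\pi_{F_k}(x)\|^2\,dx = d_k L_K^2$. The left-most inequality of Lemma~\ref{Khintchine semi-norms} with $p=2$ then forces $\int_K\|\pi_{F_k}(x)\|\,dx$ to be of order $\sqrt{d_k}\,L_K$, and the right-most inequality with $q=-1/2$ supplies an absolute constant $M_0>0$ for which
\[
\mathbb{E}\bigl[h_k^{-1/2}\mid X_1,\dots,X_{k-1}\bigr] \le M_0\,d_k^{-1/4}\,L_K^{-1/2}.
\]
Chaining these bounds by the tower property and using $\prod_{k=1}^n d_k = n!$,
\[
\mathbb{E}[V^{-1/2}] \le M_0^n\,L_K^{-n/2}\,(n!)^{1/4}.
\]

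Finally I would plug $t=c_2^n L_K^n n^{-n/2}$ into Markov; the $L_K$ factors cancel and Stirling's bound $(n!)^{1/4}\le e^{1/4}\,n^{n/4+1/8}\,e^{-n/4}$ absorbs the $n^{-n/4}$ coming from $t^{1/2}$, leaving
\[
\Pro\Bigl\{V<\tfrac{c_2^n L_K^n}{n^{n/2}}\Bigr\} \le e^{1/4}\,n^{1/8}\,\bigl(c_2^{1/2}\,M_0\,e^{-1/4}\bigr)^n.
\]
Choosing the absolute constant $c_2$ sufficiently small (concretely, any $c_2$ with $c_2^{1/2}M_0\,e^{-1/4}\le e^{-1}/4$) makes the right-hand side at most $\tfrac{1}{2}e^{-n}$ for every $n\ge 1$.

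The main obstacle is ensuring that the constant $M_0$ furnished by Lemma~\ref{Khintchine semi-norms} is truly absolute --- independent of $k$, $n$, $K$ and the random subspace $F_k$ --- so that the $n$-fold product does not accumulate $k$- or dimension-dependent factors. The Guedon constant $4e/(1+q)$ blows up as $q\to -1^+$, so $q$ must be kept bounded away from $-1$; the choice $q=-1/2$ is convenient because the resulting Stirling arithmetic between $(n!)^{1/4}$ and $n^{-n/4}$ matches the target rate $n^{-n/2}$ exactly, and the freedom in $c_2$ absorbs the residual $e^{-n/4}$ factor into the desired $e^{-n}$ tail.
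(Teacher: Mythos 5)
Your argument is essentially the paper's own proof, just with Pivovarov's lemma (which the paper cites as Lemma~\ref{Pivo lema2}) unpacked inline: the same Gram--Schmidt factorization of the determinant, the same use of Lemma~\ref{Khintchine semi-norms} with $p=2$ and $q=-1/2$ applied to the semi-norm $x\mapsto\|P_{F_k}(x)\|$, the same iterated conditioning to bound $\mathbb{E}[V^{-1/2}]$, and the same Markov--Stirling finish. The approach and all key steps coincide with the paper.
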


A proof of it can be found essentially in the work of Pivovarov \cite[Proposition
1]{pivovarov2010determinants}. We include the details for completeness.

\begin{lemma}{\cite[Lemma 2]{pivovarov2010determinants}} \label{Pivo lema2}
Let $K\subset \R^n$ an isotropic convex body and $X$ be a random vector uniformly distributed on $K$. Let $E \subset \R^n$ be a $k$-dimensional subspace and $P_E$ the orthogonal projection onto $E$.
Then the random variable
$$Y:=\frac{\left\vert P_E(X) \right\vert}{L_K \sqrt k}$$
satisfies
$$\mathbb{E} \left\vert Y \right\vert^{-\frac{1}{2}} \leq C',$$
where $C'>0$ is an absolute constant.
\end{lemma}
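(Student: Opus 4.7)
The plan is to interpret $x \mapsto |P_E(x)|$ as a semi-norm on $\R^n$ (it is the Euclidean norm composed with a linear projection) and cascade Lemma~\ref{Khintchine semi-norms} from a second-moment identity all the way down to the negative fractional moment of order $-1/2$.

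First I would compute $\int_K |P_E(x)|^2\,dx$. Fixing an orthonormal basis $\{e_1,\dots,e_k\}$ of $E$ and using that $\langle P_E(x),e_i\rangle = \langle x,e_i\rangle$, the isotropic condition on $K$ gives $\int_K \langle x,e_i\rangle^2\,dx = L_K^2$ for each $i$; summing yields $\int_K |P_E(x)|^2\,dx = k L_K^2$.

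Second, I would apply the left-hand (Borell-type) inequality of Lemma~\ref{Khintchine semi-norms} with $p=2$ to the semi-norm $|P_E(\cdot)|$ to convert this $L^2$ identity into a lower bound of order $L_K\sqrt{k}$ on $\int_K |P_E(x)|\,dx$. Then I would feed this $L^1$ estimate into the right-hand (Guedon-type) inequality of Lemma~\ref{Khintchine semi-norms} with $q=-1/2$ to obtain an upper bound of order $(L_K\sqrt{k})^{-1/2}$ on $\int_K |P_E(x)|^{-1/2}\,dx$. Since $K$ has volume one, multiplying by $(L_K\sqrt{k})^{1/2}$ and recalling the definition of $Y$ gives exactly $\mathbb{E}[Y^{-1/2}] \leq C'$ with $C'$ an absolute constant assembled from the constants appearing in Lemma~\ref{Khintchine semi-norms}.

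The only delicate point, really a bookkeeping matter rather than a genuine obstacle, is to check that Lemma~\ref{Khintchine semi-norms} is legitimately applicable: the map $|P_E(\cdot)|$ vanishes on $E^\perp$ and is therefore only a semi-norm, but the statement explicitly allows this; and the hypothesis that $K$ have unit volume is automatic for isotropic bodies. Tracking the absolute constants through the two successive applications is then routine and yields the dimension-free $C'$ claimed in the lemma.
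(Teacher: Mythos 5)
Your argument is correct and is essentially the same as the paper's: the paper also applies Lemma~\ref{Khintchine semi-norms} twice (Guedon's inequality with $q=-\nicefrac{1}{2}$ followed by the Borell-type bound with $p=2$) to the semi-norm $x\mapsto |P_E(x)|$, and then closes via the isotropy identity $\int_K |P_E(x)|^2\,dx = kL_K^2$. You simply present the chain in the opposite order, starting from the $L^2$ identity and pushing down to the negative moment, but the applications are identical.
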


%
%
%

\begin{proof}[Proof of Proposition~\ref{Propo volumen}]

Let $A: \R^n \to \R^n$ be the linear transformation mapping the canonical basis $\{e_i\}_{i=1}^n$ to $\{X_i\}_{i=1}^n$. We have
\begin{equation}
\vol(S(0,X_1,\dots,X_n)) = \frac{|\det(A)|}{n!}.
\end{equation}

Set $V_k := \spa \{X_1,\dots,X_k\}$ and $Y_k = \frac{|P_{{V_K}^\bot} X_k|}{L_K \sqrt{n-k+1}}$. Note that by Lemma~\ref{Pivo lema2} if $X_1,\dots,X_{k-1}$ are fixed we have $\EE[|Y_k|^{-\frac{1}{2}}] \leq C'$.

Using the fact that
\begin{equation}
|\det(A)| = \Vert X_1 \Vert \Vert  P_{{V_1}^\bot}(X_2) \Vert \dots \Vert P_{{V_{n-1}}^\bot} (X_n)\Vert
\end{equation}
and applying Fubbini theorem iteratively we obtain
\begin{equation} \label{equation esperanza negativa}
\EE[\prod_i^n Y_k^{-\frac{1}{2}}] \leq (C')^n .
\end{equation}
Let $\alpha >0$ be a constant to be determined. Then by Markov inequality and Equation~\eqref{equation esperanza negativa} we have
\begin{align*}
\Pro(|\det(A)| < \alpha^n L_K^n \sqrt{n!}) &= \Pro(\prod_i^n Y_k < \alpha^n)\\
& = \Pro(\prod_i^n Y_k ^{-\frac{n}{2}}> \alpha^{-\frac{n}{2}})\\
& \leq \EE[\prod_i^n Y_k^{-\frac{1}{2}}] \alpha^\frac{n}{2}.
\end{align*}
Setting $\alpha = (eC')^{-2}$ we obtain
\begin{align*}
\Pro (\vol(S (0,X_1,\dots,X_n)) < \frac{\alpha^nL_K^n}{\sqrt{n!}}) \leq \frac{1}{2}e^{-n}.
\end{align*}
The result follows by applying Stirling formula.
\end{proof}

Based on the arguments given in the recent paper of Nasz{\'o}di \cite{naszodi2016proof} and with Propositions ~\ref{Propo 1} and~\ref{Propo volumen} at hand, we can now give a proof of Theorem~\ref{Theorem B}.

\begin{figure}

\begin{center}
\begin{tikzpicture}
\definecolor{cof}{RGB}{20,70,200}
\definecolor{pur}{RGB}{0,70,162}
\definecolor{greeo}{RGB}{91,173,69}
\definecolor{greet}{RGB}{100,0,0}
\pgfdeclarelayer{bg}
\pgfsetlayers{main,bg}

\begin{scope}
\node at (2,-4) {\large $K$};
\def \pa {(1,-3)};
\def \pb {(1,3)};
\def \pc {(4,0)};
\def \pd {(3,-3)};
\def \pe {(3,3)};


  \draw (0,0) arc (180:360:2cm and 0.6cm);
 \draw[dashed] (0,0) arc (180:0:2cm and 0.6cm);

  \draw \pa arc (180:360:1cm and 0.3cm);
 \draw[dashed] \pa arc (180:0:1cm and 0.3cm);

  \draw \pb arc (180:360:1cm and 0.3cm);
 \draw[dashed] \pb arc (180:0:1cm and 0.3cm);

\draw (0,0) -- \pa;
\draw (0,0) -- \pb;
\draw \pc -- \pd;
\draw \pc -- \pe;

\end{scope}

\begin{scope}[xshift = 1.5cm,yshift = -1cm]
\def \pa {(-0.7741,0.0764)};
\def \pb {(-0.571,-0.524)};
\def \pc {(0.971,-0.676)};
\def \pd {(0.370,1.124)};
\def \pw {(0,0)};
\def \pz {(0.778722777,2.35957407)};
\def \labS {(0.2,-1)}
\def \fiS {(-0.3,2)}


\node at \pw{$\bari(T)$};

\node at (0.77,2.6){$w$};

\node at \labS{$T$};
\node [thick] at \fiS{\textbf{$S=\varphi(T)$}};
\def \fia {(-0.065,1.1)};
\def \fib {(-0,0.75)};
\def \fic {(0.87,0.6553278)};
\def \fid {(0.6,1.879328)};

\draw [thick][top color=gray,opacity = 0.8] \fia -- \fib -- \fid;
\draw [thick][top color=gray,opacity = 0.8] \fic -- \fia -- \fid;
\draw [thick][top color=gray,opacity = 0.8] \fib -- \fic -- \fid;
\draw [thick][top color=gray,opacity = 0.8] \fib -- \fic -- \fia;

\draw \pa -- \pb -- \pd;
\draw \pc -- \pa -- \pd;
\draw \pb -- \pc -- \pd;
\draw \pb -- \pc -- \pa;

\draw [dashed] \pa -- \pz;
\draw [dashed] \pb -- \pz;
\draw [dashed] \pc -- \pz;

\end{scope}
\node at (1.87,0.25){$0$};
\end{tikzpicture}
\end{center}
\caption{Construction involved in the proof of Theorem~\ref{Theorem B}.}
\label{const}

\end{figure}

\begin{proof}[Proof of Theorem~\ref{Theorem B}]
Let $K \subset \R^n$ be an isotropic convex body and $X_1, \dots, X_n$ be independent random vectors uniformly distributed on $K$.
Denote by $T$ the simplex $S(0, X_1, \dots, X_n)$ and by $u$ its barycenter; i.e., $u= \frac{1}{n+1} \sum_{i=1}^n X_i$. By Proposition~\ref{Propo 1} there is an absolute constant $c_1>0$ such that
\begin{equation}
\Pro\left\{  \left\Vert u \right\Vert \leq c_1 L_K \right\} > 1 - \frac{1}{2} e^{-n}.
\end{equation}

On the other hand, by Proposition~\ref{Propo volumen}, we know that there is an absolute constant $c_2>0$ such that
\begin{equation}
\Pro\left\{  \vol\left(T \right) \geq \frac{c_2^n L_K^n}{n^{\frac{n}{2}}} \right\} > 1 - \frac{1}{2} e^{-n}.
\end{equation}

By a well-known result of Kannan, Lov{\'a}sz and Simonovits \cite[Theorem 4.1.]{kannan1995isoperimetric}  we have that
\begin{equation}
\sqrt{\frac{n+2}{n}} L_K B_2^n \subset K
\end{equation}
(note that for the authors the definition of an isotropic convex body is different, that is why the constant $L_K$ in the theorem is missing).
Therefore, the vector $w:=- \frac{1}{c_1} u$ belongs to $K$ with probability greater than  $1 - \frac{1}{2} e^{-n}$.

It is easy to check that if we apply the homothetic transformation  with center $w$ and ratio
$$\lambda = \frac{\Vert w \Vert}{\Vert w - u \Vert} = \frac{\Vert w \Vert}{\Vert w \Vert  + \Vert u \Vert} = \frac{1}{1+c_1} > 0$$
to the simplex $T$, we obtain another simplex $S$ with barycenter at the origin (see the Figure \ref{const}) such that

\begin{equation}
\vol(S) \geq \lambda^n \vol(T) \geq \lambda^n \cdot \frac{c_2^n L_K^n}{n^{\frac{n}{2}}}.
\end{equation}
Denote by $\bar{X}:= \frac{1}{n+1} \sum_{i=1}^n X_i$. Therefore, the function $f_n : \underbrace{\R^n \times \dots \times \R^n}_n \to \mathcal S_0^n$ we are looking for can be defined by
\begin{align*}
f_n(X_1,  \dots, X_n)  &:= \varphi(S(0, X_1, \dots, X_n))   \\
& = \frac{1}{1+c_1}S\left(-\bar{X}, X_1 - \bar{X},  \dots, X_n - \bar{X}\right).\\
\end{align*}
This concludes the proof.
\end{proof}

\subsection{Deduction of Theorem~\ref{main theorem} and its correct asymptotic behavior}\label{Section 3}

In this section we show how to deduce our main result, Theorem~\ref{main theorem} from Theorem~\ref{Theorem A}. We also show that the volume ratio, up to the absolute constants, is sharp.

We start with a well known lemma.
\begin{lemma}\label{Mahler}
For any simplex $S \subset \R^n$ with barycenter at the  origin, we have
\begin{equation} \label{Mahler simplices}
\M(S) = \frac{(n+1)^{(n+1)}}{{(n!)}^2}.
\end{equation}
\end{lemma}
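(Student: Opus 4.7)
The plan is to identify the vertices of $S^\circ$ explicitly and then read off the product of the two volumes from a single $(n+1)\times(n+1)$ determinant. Let $v_0,\ldots,v_n$ be the vertices of $S$; by hypothesis $\sum_{i=0}^n v_i=0$, so $0$ lies in the interior of $S$ and $S^\circ$ is itself a simplex whose vertices $u_0,\ldots,u_n$ correspond to the facets of $S$.

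First I would compute those dual vertices. The vertex $u_j$ is characterized by $\langle u_j, v_i\rangle = 1$ for every $i\neq j$; combining this with $\sum_i v_i=0$ forces $\langle u_j, v_j\rangle = -n$. In particular, the $(n+1)\times(n+1)$ matrix $N$ with $N_{ij} := \langle u_j, v_i\rangle + 1$ has off-diagonal entries equal to $2$ and diagonal entries equal to $1-n$, i.e.\ $N = 2J - (n+1)I$, where $J$ is the all-ones matrix. As a byproduct, the same relations give $\sum_j u_j = 0$, so $S^\circ$ is centered too.

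Next I would apply the standard determinantal volume formula for simplices: if
$$\tilde V = \begin{pmatrix} v_0 & \cdots & v_n \\ 1 & \cdots & 1 \end{pmatrix}, \qquad \tilde U = \begin{pmatrix} u_0 & \cdots & u_n \\ 1 & \cdots & 1 \end{pmatrix},$$
then $\vol(S) = \tfrac{1}{n!}|\det\tilde V|$ and $\vol(S^\circ) = \tfrac{1}{n!}|\det\tilde U|$. The key observation is that $\tilde V^{T}\tilde U = N$, so
$$\M(S) = \vol(S)\,\vol(S^\circ) = \frac{|\det\tilde V|\,|\det\tilde U|}{(n!)^2} = \frac{|\det(2J - (n+1)I)|}{(n!)^2}.$$

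Finally, $2J-(n+1)I$ is diagonalized by the splitting $\R^{n+1} = \R\cdot\mathbf{1}\oplus\mathbf{1}^\perp$: on the first factor the eigenvalue is $n+1$, and on the second it is $-(n+1)$ with multiplicity $n$. Hence $|\det(2J-(n+1)I)| = (n+1)^{n+1}$, which yields the claimed value $\M(S) = (n+1)^{n+1}/(n!)^2$. The only step that requires genuine care is the first one --- verifying the pairing identities for the $u_j$'s --- after which the rest collapses to the eigenvalue computation above.
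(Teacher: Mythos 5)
Your argument is correct and takes a genuinely different route from the paper's. The paper invokes the $GL(n)$-invariance of the Mahler product to reduce to a single concrete representative, namely $S(e_1,\ldots,e_n,-\sum_i e_i)$, and then computes $\vol(S)=(n+1)/n!$ and $\vol(S^\circ)=(n+1)^n/n!$ directly for that simplex. You instead keep the simplex arbitrary, identify the vertices $u_j$ of $S^\circ$ via the facet equations $\langle u_j,v_i\rangle=1$ ($i\neq j$), derive $\langle u_j,v_j\rangle=-n$ from $\sum_i v_i=0$, and package everything into the single identity $\tilde V^{T}\tilde U = 2J-(n+1)I$, whose determinant $\pm(n+1)^{n+1}$ immediately gives $\vol(S)\vol(S^\circ)=(n+1)^{n+1}/(n!)^2$. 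Your approach is intrinsic (no need to choose a model simplex or appeal to affine invariance), and it exposes the structural reason for the answer: the Gram-type pairing between vertices of $S$ and vertices of $S^\circ$ is the circulant-like matrix $2J-(n+1)I$, the determinant of which is computed by splitting $\R^{n+1}=\R\mathbf{1}\oplus\mathbf{1}^\perp$. The paper's route is shorter and more elementary, at the cost of an \emph{ad hoc} computation on one example. One small point worth making explicit if you were to write this up: the $n$ equations $\langle u_j,v_i\rangle=1$ ($i\neq j$) determine $u_j$ uniquely because the vectors $\{v_i\}_{i\neq j}$ are linearly (not merely affinely) independent, which holds since $0\in\mathrm{int}\,S$ keeps the origin off every facet.
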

Unfortunately, we could not find an exact reference of the previous lemma. We include a sketch of its proof: the Mahler product of a simplex is invariant under linear transformations, then it is possible to compute it using a particular example (all simplices belong to the same equivalence class). In particular, let $\{e_i\}_{i=1}^n$ be the canonical basis and consider the simplex $S:=S(e_1, e_2, \ldots, e_n, -\sum_{i=1}^n e_i),$ then its volume is $(n+1)/n!$. On the other hand, its polar $S^{\circ}$ is the simplex $S(v_0, v_1, \dots, v_n)$ where $v_0 = \sum_{i=1}^n e_i$ and $v_j = v_0 - (n+1) e_j$ for $1 \leq j \leq n$, whose volume is $(n+1)^n/n!$.

We can now give a proof of Theorem~\ref{main theorem}.

\begin{proof}[Proof of Theorem~\ref{main theorem} and Equation \eqref{boundsimplexconcteisotropianosim}]
Let $K \subset \R^n$ be an arbitrary convex set with barycenter at the origin.
By the Rogers-Shephard inequality \cite[Theorem 1.5.2]{artstein2015asymptotic} the centrally symmetric so-called difference body $D(K)=K-K$ contains $K$ and fulfills
\begin{equation}\label{eq0}
\left(\frac{\vol(D(K))}{\vol(K)} \right)^{1/n} \leq 4.
\end{equation}

By Equation~\eqref{desigualdad adentro} applied to the body $D(K)^{\circ}$ there is a simplex with barycenter at the origin $T \subset D(K)^{\circ}$ such that
\begin{equation} \label{eq a}
\left(\frac{\vol(D(K)^{\circ})}{\vol(T)}\right)^{1/n} \leq c \frac{\sqrt{n}}{L_{D(K)^{\circ}}},
\end{equation}
where $c>0$ is an absolute constant.

Consider $S$ the simplex $T^{\circ}$. It is not difficult to see that $S$ has also barycenter at the origin and obviously $S \supset D(K)$.
Now,

\begin{equation} \label{eq b}
 \frac{\vol(S)}{\vol(D(K))} =  \frac{\vol(S)\vol(T)}{\vol(D(K))\vol(D(K)^{\circ})}  \cdot \frac{\vol(D(K)^{\circ})}{\vol(T)}.
\end{equation}

By Lemma~\ref{Mahler}, the Bourgain-Milman inequality \cite[Theorem 8.2.2.]{artstein2015asymptotic} and  Stirling formula we have
\begin{equation}  \label{eq c}
 \left( \frac{\vol(S)\vol(T)}{\vol(D(K))\vol(D(K)^{\circ})} \right)^{1/n} \leq c
\end{equation}
for an absolute constant $c>0$.

The result now follows immediately form Equations  \eqref{eq a}, \eqref{eq b}, \eqref{eq c} and the fact that $D(K) \supset K$ and hence $S \supset K$.
\end{proof}

As we can see in the following example, the asymptotic behavior of the volume ratio given in Theorem~\ref{main theorem} cannot be improved.
\begin{figure}
\begin{center}

\begin{tikzpicture}
\definecolor{cof}{RGB}{20,70,200}
\definecolor{pur}{RGB}{0,70,162}
\definecolor{greeo}{RGB}{91,173,69}
\definecolor{greet}{RGB}{100,0,0}
\begin{scope}[xshift =5cm, yshift =5cm]
\def \pa {(2.0,-0.44721374)};
\def \pb {(-2.0,-0.44721374)};
\def \pc {(0.0,2.3445802)};
\def \pd {(0.0,-1.4501528)};
\draw  \pa -- \pc -- \pd ;
\draw \pb -- \pd -- \pc;
\draw [dashed]\pa -- \pb;
\draw \pb-- \pc;
\draw \pd -- \pa;
\draw \pb -- \pd -- \pa;
\end{scope}

\begin{scope} [xshift =5cm, yshift =5.1cm]
 \draw (-1,0) arc (180:360:1cm and 0.5cm);
 \draw[dashed] (-1,0) arc (180:0:1cm and 0.5cm);
 \draw (0,1) arc (90:270:0.5cm and 1cm);
 \draw[dashed] (0,1) arc (90:-90:0.5cm and 1cm);
 \draw (0,0) circle (1cm);
   \shade[ ball color=gray,opacity=0.50] (0,0) circle (1cm);
\end{scope}

    \end{tikzpicture}

\end{center}

    \caption{The simplex of minimal volume enclosing the Euclidean ball is the regular simplex circumscribing it.}
\label{simplex y bola}
\end{figure}

\begin{example}[The minimal volume simplex for the Euclidean ball] \label{ejemplo bola}
Let $K:=B_2^n$, the Euclidean ball, and $S \supset B_2^n$ the regular simplex circumscribing $K$.
As we can infer from the proof of \cite[Theorem 2.4.8. (ii)]{artstein2015asymptotic} $K=B_2^n$ is the maximal volume ellipsoid inside $S$ or, in other words, $S$ is in John position.

Let us see that $S$ is the minimal volume simplex containing $K$. If not, then there is a simplex $T \subset \R^n$ enclosing the ball with $\vol(T) < \vol(S)$.
Consider the linear transformation $A \in GL(n)$ such that $A(S)=T$; then, $\vert \det(A) \vert < 1$. Therefore $A^{-1}(B_2^n)$ is an ellipsoid with volume greater that $\vol(B_2^n)$ inside $S$, which is a contradiction.

If we compute the volumes (for the simplex it is easier to do it working in $\R^{n+1}$ on the hyperplane $\sum_{i} x_i =1$) we have:
\begin{align}
Vol(S) &= \frac{ (n+1)^{\frac{n+1}{2}}n^{\frac{n}{2}}}{n!}, \\
Vol(K) &= \frac{\pi^{\frac{n}{2}}}{\Gamma(\frac{n}{2} + 1)}.
\end{align}
Using Stirling formula we therefore get
\begin{equation}
\left(\frac{Vol(S)}{Vol(K)}\right)^{\frac{1}{n}} \approx \widetilde{d} \sqrt{n},
\end{equation}
for an absolute constant $\widetilde{d}>0$.
\end{example}

\section{The case of the cube and a non-probabilistic proof of Theorem~\ref{main theorem}}\label{graciasref}

As mentioned, for $n=2$ the cube has the largest volume ratio (respect to the simplex of minimal volume containing it); for $n=3$ the same is conjectured. One should expect that a similar phenomenon occurs in high dimensions but, as we can see in the following example, the volume ratio of the cube is uniformly bounded. Moreover, we show that the simplex can be taken with the same barycenter as the cube.

\begin{example} \label{ejemplocubo}
Let $K$ be the cube $[-\frac{1}{2},\frac{1}{2}]^n \subset \R^n$. There is a centered simplex $S$ such that $K \subset S$ and
\begin{equation}
\left(\frac{Vol(S)}{Vol(K)}\right)^{\frac{1}{n}} \leq c,
\end{equation}
for an absolute constant $c>0$.
\end{example}
\begin{proof}

Denote by $\mathbbm{1}$ the vector in $\R^n$ defined as $\sum_{j=1}^n e_j$. Consider the simplex
$$S:=S\left(-\frac{n}{2} \mathbbm{1}, n e_1 - \frac{1}{2} \mathbbm{1}, n e_2  -\frac{1}{2} \mathbbm{1}, \dots, n e_n  -\frac{1}{2} \mathbbm{1}\right).$$

It is easy to see that $\bari(S)=\bari(K)=0$.
Observe also that the cube $[-\frac{1}{2},\frac{1}{2}]^n$ is included in the simplex $T:=S\left(-\frac{1}{2} \mathbbm{1}, n e_1 - \frac{1}{2} \mathbbm{1}, n e_2  -\frac{1}{2} \mathbbm{1}, \dots, n e_n - \frac{1}{2} \mathbbm{1}\right)$. Indeed,
all the points that lie in the cube have coordinates greater than or equal to $-\frac{1}{2}$ and their sum is less than or equal to $\frac{n}{2}$.
It remains to see that the point $-\frac{1}{2} \mathbbm{1}$ belongs to $S$, but $-\frac{1}{2} \mathbbm{1}$  is exactly $t (-\frac{n}{2}) \mathbbm{1} + (1-t) \frac{1}{2} \mathbbm{1}$ for $t= \frac{2}{n+1}$.
An easy computation proves that the volume of $S$ is exactly $ \frac{n^n (n+1)}{2 n!}$.
\end{proof}

\begin{figure}[H]
\begin{center}
\begin{tikzpicture}
\definecolor{greet}{RGB}{50,0,50}
\begin{scope}[yshift =- 14cm]
\begin{scope}
\def \pa {(-1.52,-0.83)};
\def \pb {(1.74,0.42)};
\def \pc {(0.42,2.86)};
\def \pd {(2.41,-0.8)};
\def \po {(0,0)};
\def \pm {(-1.9,-0.83)}
\node at \pm{$-\frac{3}{2}\mathbbm {1}$};

\draw[dotted] \po -- \pa;
\draw \pb -- \pc -- \pa;
\draw  \pc -- \pd -- \pb -- \pc;
\draw \pd -- \pb; 
\draw \pa -- \pd;
\draw \pb -- \pc -- \pa;
\draw \pb -- \pc -- \pa;
\draw [dashed] \pa -- \pb;
\fill [color=greet,opacity=0.04] \pa --\pb -- \pd;
\end{scope}
\begin{scope}
\def \pa {(0,0)};
\def \pb {(0.8,-0.27)};
\def \pc {(0.14,0.95)};
\def \pd {(0.94,0.69)};
\def \pe {(0.58,0.14)};
\def \pf {(1.38,-0.13)};
\def \pg {(0.72,1.09)};
\def \ph {(1.52,0.83)};
\def \pn {(-0.4,0)};
\def \pp{(1.92,0.83)}
\node at \pn{$-\frac{1}{2}\mathbbm {1}$};
\node at \pp{$\frac{1}{2}\mathbbm {1}$};
(0.0,0.0),(0.8,-0.27),(0.14,0.95),(0.94,0.69),(0.58,0.14),(1.38,-0.13),(0.72,1.09),(1.52,0.83)


\draw  \pa -- \pb -- \pd ;
\draw \pa -- \pc  -- \pd;
\draw \pa -- \pe -- \pf;
\draw \pb -- \pf -- \ph;
\draw \pc -- \pg -- \ph;
\draw \pg -- \pe;
\draw \pd -- \ph;
\draw [top color =gray,opacity = 0.8] \pg -- \ph -- \pd -- \pc -- \pg;
\draw [top color=gray,opacity = 0.8]\pc -- \pa -- \pb -- \pd;
\draw  [top color=gray,opacity = 0.8]\pd -- \ph -- \pf -- \pb;
\draw [top color=gray,opacity = 0.8]\pa -- \pe -- \pf -- \pb;
\draw [top color=gray,opacity = 0.8]\pg -- \ph -- \pf -- \pe;
\draw  [top color=gray,opacity = 0.8]\pg -- \pe -- \pa -- \pc;
\draw[dotted] \pa--\ph;
\end{scope}
(-1.52,-0.83),(1.74,0.42),(0.42,2.86),(2.41,-0.8)
(1.2,0.6)

\end{scope}
\end{tikzpicture}

\end{center}
\caption{The simplex $S\left(-\frac{3}{2} \mathbbm{1}, 3 e_1 - \frac{1}{2} \mathbbm{1}, 3 e_2  -\frac{1}{2} \mathbbm{1}, 3 e_3  -\frac{1}{2} \mathbbm{1}\right)$ enclosing the cube $[-\frac{1}{2},\frac{1}{2}]^3 \subset \R^3$ as in Example~\ref{ejemplocubo}.}
\end{figure}

We end the article giving a non-probabilistic proof of Theorem~\ref{main theorem}. This relies on the Rogers-Shephard inequality, the Dvoretzky-Rogers theorem and previous estimate for the cube.

\begin{proof}[Proof of Theorem~\ref{main theorem}]
Again, by the the Rogers-Shephard inequality \cite[Theorem 1.5.2]{artstein2015asymptotic} (see \eqref{eq0}) we can suppose, without loss of generality that $K$ is centrally symmetric. Using a well-known result of Dvoretzky and Rogers \cite[Theorem 5A]{dvoretzky1950absolute} (see also \cite{pelczynski1991parallelepipeds}) there is a centrally symmetric parallelepiped $ P \supset K$ such that
\begin{equation}\label{eq1}
\left(\frac{\vol(P)}{\vol(K)} \right)^{1/n} \leq c \sqrt{n},
\end{equation}
for some absolute constant $c>0$.
The result now follows combining Equation \eqref{eq1} and the bound given in Example \ref{ejemplocubo} for the simplex containing the parallelepiped $P$ (with, of course, \eqref{invariantetransf}).
\end{proof}

Comparing the result obtained with this technique with \eqref{boundsimplexconcteisotropia} and \eqref{boundsimplexconcteisotropianosim}, one should note that the isotropic constant is missing (maybe in case the isotropic constant conjecture \cite[Conjecture 10.1.7]{artstein2015asymptotic} is false,  \eqref{boundsimplexconcteisotropia} or \eqref{boundsimplexconcteisotropianosim} could give better estimates for certain bodies).

\subsection{Acknowledgement}
The authors are grateful to the anonymous referee for the clever insight regarding Problem~\ref{problem0} which gave origin to the previous section.

\end{document}